\documentclass[11pt,reqno]{amsart}
\usepackage{amssymb}
\usepackage{amsmath}
\usepackage{mathrsfs}
\usepackage{amssymb, amsmath, amsfonts}

\usepackage{mathabx}

\usepackage{color}

\usepackage[hidelinks]{hyperref}
\allowdisplaybreaks
\usepackage[numbers,sort&compress]{natbib}

\makeatletter
\def\tank#1{\protected@xdef\@thanks{\@thanks
 \protect\footnotetext[0]{#1}}}
\def\bigfoot{

 \@footnotetext}
\makeatother

\newcommand{\ea}{\end{array}}

\allowdisplaybreaks
\numberwithin{equation}{section}
\newtheorem{theorem}{Theorem}[section]
\newtheorem{lemma}{Lemma}[section]
\newtheorem{proposition}{Proposition}[section]

\newtheorem{remark}{Remark}[section]

\newtheorem{definition}{Definition}[section]

\def\beq{\begin{equation}}
\def\nneq{\end{equation}}

\def\bthm{\begin{theorem}}
\def\nthm{\end{theorem}}

\def\blem{\begin{lemma}}
\def\nlem{\end{lemma}}
\def\bprf{\begin{proof}}
\def\nprf{\end{proof}}
\def\bprop{\begin{prop}}
\def\nprop{\end{prop}}
\def\brmk{\begin{rem}}
\def\nrmk{\end{rem}}

\def\bexa{\begin{exa}}
\def\nexa{\end{exa}}
\def\bcor{\begin{cor}}
\def\ncor{\end{cor}}

\def\e{\varepsilon}

\def\RR{\mathbb{R}}

\def\e{{\varepsilon}}

\title[]{Transportation cost inequalities for mean reflection SPDEs with white noise}

 \author[B. Zhang]{Beibei Zhang}
    \address[]{Beibei Zhang, Department of Mathematics and Statistics, Suzhou university of Technology, Changshu, Jiangsu, 215500,  China.}
    \email{zhangbb@whu.edu.cn}

\author[B. Qian]{Bin Qian}
    \address[]{Bin Qian, Department of Mathematics and Statistics, Suzhou university of Technology, Changshu, Jiangsu, 215500,  China.}
    \email{binqiancn@126.com}

\date{}

\begin{document}
\maketitle

 \noindent {\bf Abstract:}
We establish a quadratic transportation cost inequality under the uniform norm for solutions to mean reflected stochastic partial differential equations, a new type of equation in which the compensating reflection part depends not on the paths but on the law of the solution.

 \vskip0.3cm
 \noindent{\bf Keywords:} {Mean reflection; Quadratic transportation cost  inequalities; Stochastic  partial differential equations}
 \vskip0.3cm

\noindent {\bf MSC: }
 {60H15; 60E15}
\vskip0.3cm

 \section{Introduction}

 In this paper, we consider mean reflected stochastic partial differential equations (SPDEs for short) driven by  space-time white noise, which are reflected SPDEs with a constraint on the law of the process $u$ rather than on its paths. More precisely, we consider the following SPDE:
\begin{equation}\label{SPDE}
\left\{\begin{split}
   &\frac{\partial u}{\partial t}(t, x)- \frac{\partial^2 u}{ \partial x^2} (t,x) + f(t,x,u(t,x))=\sigma(t,x,u(t,x)) \dot W(t, x)\\&\qquad\qquad\qquad \qquad\qquad\qquad\qquad\qquad\quad+\dot{K}(t,x),\quad x\in [0,1],t\geq0\\
   &u(0,x)=u_0(x),\quad x\in [0,1],\\
    &u(t,0)=u(t,1)=0,\quad t\ge 0,
 \end{split}\right.
\end{equation}
with \begin{align}\label{Restraincondition}
\mathbb{E}\left[h(t,x,u(t,x))\right]\geq 0, \quad \forall x\in[0,1], t\geq0.
\end{align}
 Here $\dot W(t,x)=\frac{\partial^2W}{\partial t\partial x} (t,x)$   denotes space-time white noise defined on a complete probability space
 $(\Omega, \mathcal F, \{\mathcal F_t\}_{t\ge0}, \mathbb P)$ with  $\mathcal F_t=\sigma\{W(s, x): 0\le s\le t, x\ge 0\}$,  i.e.,  $\dot W(t,x)$ is a random set function  on sets $A\subseteq  \mathbb{R}_+\times[0,1]$  (we always use $\mathbb{R}_+$ to denote $[0,\infty)$ throughout this paper) of finite Lebesgue measure $m$ such that
  (i) $\dot W(A)$ is an $N(0,m(A))$-random variable;
  (ii) if $A\cap B=\emptyset$, then $\dot W(A)$ and $\dot W(B)$ are independent and $\dot W(A\cup B)= \dot W(A) +\dot W(B)$.
  The initial condition $u_0(x)$ is a non-negative continuous function, which vanishes at points 0 and 1. The coefficients $f$ and $\sigma$ are measurable mappings from $\mathbb{R}_+\times[0,1]\times C(\mathbb{R}_+\times[0,1])$ into $\mathbb R$. Moreover, $K$ is a  measure which compensates reflections of the functional of $u$ of the form $\mathbb{E}\left[h(t,x,u(t,x))\right]\geq 0$ for a given continuous function $h$. In the spirit of Skorokhod condition for reflected equations, {\it the so-called flat solution}  (defined by Definition \ref{def solution} below) satisfies the extra condition
  \begin{align*}
  \int^T_0\int^1_0\mathbb{E}\left[h(t,x,u(t,x))\right] K(\mathrm{d}t,\mathrm{d}x)=0 \quad \text{for all}~T>0.
  \end{align*}
This condition intuitively indicates that $K$ is only allowed to push the random process $u$ when the constraint is binding.
These mean reflected SPDEs demonstrate considerable potential for application across a range of physical models. In complex systems governed by SPDEs, individual dynamics rarely act in isolation but are instead globally constrained by collective behavior. For example, in intracellular transport, the activation of membrane efflux channels requires that a key ion achieve a certain average concentration threshold; in ecological systems, preventing local species extinction is essential for maintaining overall ecosystem stability; in turbulent flows, regulating the average kinetic energy within a bounded range is crucial to suppressing excessive turbulence; and in electrochemical environments, the average current density at the electrode surface may also need to be maintained within a precise target interval.

 There have been a substantial number of investigations focusing on various aspects of reflection problems for SPDEs. Nualart and Pardoux  \cite{NP1992} first established  the well-posedness of the obstacle problem for the stochastic heat equation driven by the space-time white noise with a constant diffusion coefficient.  Then, the reflected SPDEs have been widely studied in recent years.  Research findings cover aspects such as the existence and uniqueness of solutions and asymptotic behavior, among other topics. Examples of relevant references include \cite{DMP1993,X2022,XZ2009,Zhang2010}, etc. None of the above-mentioned papers consider the case when the reflection is on the law of the solution.
In a notable extension, Duan, Hu and Peng  \cite{DuanJTP2025} considered  mean reflected SPDEs, where the compensating reflection part depends not on the paths but on the law of the solution. These constitute a new type of reflected SPDEs and enrich the reflected SPDE theory.

In this paper, we  focus on {\it the quadratic transportation cost inequality} for the solution of  Eq.\,\eqref{SPDE}.
Let us  now revisit  {\it the transportation cost inequality}.
 Suppose that $(E,d)$ is a metric space,  and let  $\mathcal{M}(E)$ denote the class of all probability measures on $E$. Given two probability measures $\mu, \nu\in \mathcal{M}(E)$ and $p\geq1$, {\it the Wasserstein distance} is defined by
\begin{align*}
W_p(\mu,\nu):=\inf_\pi\left[\int_E\int_Ed(x,y)^p\pi(\mathrm{d}x,\mathrm{d}y)\right]^{\frac{1}{p}},
\end{align*}
where the infimum is taken over all probability measures $\pi$ on  $E\times E$ with marginal distributions $\mu$ and $\nu$. The relative entropy of $\nu$ with respect to $\mu$ is defined by
\begin{align}\label{Def-H}
H(\nu|\mu):=
\begin{cases}
\displaystyle\int_E\log\frac{\mathrm{d}\nu}{\mathrm{d}\mu}\,\mathrm{d}\nu, & \text{if }\nu\ll\mu;\\[8pt]
+\infty, & \text{otherwise}.
\end{cases}
\end{align}
\begin{definition}{\rm
The probability measure $\mu$ is said to satisfy the transportation cost inequality $T_p(C)$ on $(E,d)$ if there exists a constant $C>0$ such that, for any probability measure $\nu$ on $\mathcal{M}(E)$,}
\begin{align}\label{Inequality-W_p}
W_p(\mu,\nu)\leq\sqrt{2CH(\nu|\mu)}.
\end{align}
\end{definition}
We write $\mu\in T_p(C)$ for this relation for short.
In fact, by H\"older's inequality,
\begin{align*}
T_{p'}(C)\subseteq T_{p}(C)\,\,\text{for}\,\,1\leq p \leq p' \,\,\text{and}\,\, C>0.
\end{align*}
Let us  recall some preliminaries regarding the two most particular  transportation cost inequalities, $T_1(C)$  and $T_2(C)$.  The inequality $T_1(C)$ is related to the concentration of measure phenomenon, see, e.g., \cite{Bobkov2000, Boucheron2013, DGW2004,Talagrand.1996}. If there exist constants $C$ and $r>0$ such that for every $\e>0$ and every Borel subset $A\subseteq  E$ with $\mu(A)\geq\frac{1}{2}$, $$\mu(A_{\e})\geq 1-Ce^{-r\e^2},$$
where $A_{\e}$ is the $\e$-neighborhood of $A$, i.e., $$A_{\e}=\{y:d(x,y)<\e\text{ for some } x\in A \},$$  then it is called that $\mu\in \mathcal{M}(E)$ has normal concentration on $(E,d)$.

The inequality $T_1(C)$ guarantees a normal concentration property for a probability measure (see, e.g., \cite{Marton1996, Ledoux2001, Boucheron2013,MartonK.1996}). To obtain such a concentration result, it is sufficient to prove the inequality \eqref{Inequality-W_p} for $p>1$  since  the $L^p$-transportation cost inequality \eqref{Inequality-W_p} becomes stronger as $p$  increases. The case $p=2$, known as  the quadratic transportation cost inequality  ($T_2(C)$), is of particular interest.  The  inequality  $T_2(C)$  stands out for two practical reasons: it allows us to use stochastic calculus and has the dimension free tensorization property. The  inequality  $T_2(C)$ was first established by Talagrand \cite{Talagrand19966} for the Gaussian measure. The approach of Talagrand was generalized by Feyel and \"Ust\"unel  \cite{FeyelU2004}
to the framework of the abstract Wiener space. Transportation cost inequalities also have close connections with  the logarithmic Sobolev and  Poincar\'e
inequalities, see, e.g., \cite{BobkovGotze1999,OV2000}.

 Numerous studies have investigated the $T_2(C)$ for S(P)DEs. A key starting point is the work of Djellout, Guillin and Wu \cite{DGW2004}, who studied the $T_2(C)$ for SDEs using Girsanov's transform  under the $L^2$
and the Cameron-Martin metrics. Their approach has inspired numerous subsequent studies, see, e.g., \cite{Guillin2014, Soumik2012, SSA2019}. For SDEs,
Bao et al. \cite{BaoWY2013} established the $T_2(C)$ for SDEs under both the uniform and the $L^2$ metrics. The framework was later extended to reflected SDEs by Boufoussi and Mouchtabih \cite{BM2019}, who also established the $T_2(C)$  under the uniform
and the $L^2$ metrics for laws of solutions of RSDEs. In the SPDE setting, early progress was made by Wu and Zhang \cite{WuZhang2006W}, who obtained $T_2(C)$   under the $L^2$ metric by  Galerkin approximation. Boufoussi and Hajji \cite{BH2018} later obtained $T_2(C)$ under
the $L^2$ metric for stochastic heat equations driven by the space-time white noise.
Khoshnevisan and Sarantsev \cite{KS2019} established the $T_2(C)$ for more general SPDEs under the $L^2$  and the uniform metrics in the case of additive noise. A significant breakthrough came from Shang and Zhang \cite{SZ2019}, who were the first to establish the  $T_2(C)$ under the uniform metric for stochastic heat equations driven by the multiplicative space-time white noise. Other related transportation cost inequalities in the SPDE context can be found in works such as \cite{LiLi2020,LiZhang2024,SSA2019, ShangWR2020,  WanfFengyuZTS2020}.

This paper aims to establish the  $T_2(C)$ for Eq.\,\eqref{SPDE} on  the space $C([0,T]\times[0,1])$.  In contrast to the  $T_2(C)$ for reflected SPDEs driven by space-time white noise under the uniform norm in \cite{LiZhang2024},  we consider the case where the   compensating reflection part depends not on the paths but on the law of the solution.

We mainly face two difficulties. The first difficulty stems from the fact that the compensating reflection
part depends not on the paths but on the law of the solution. To address this, we employ an estimate inspired by \cite{DuanJTP2025} showing that the difference in the uniform norm of solutions can be controlled by its expected value. The second difficulty comes from the multiplicative space-time white noise.  Due to technical restrictions, moment estimates for multiplicative space-time white noise hold only for large orders. To overcome this, we use a
$p$-th moment inequality that allows the order $p$ to be any positive number, in contrast to the requirement in \cite{SZ2019} that $p$ be sufficiently large.

The rest of this paper is organized as follows. In Section 2, we review the well-posedness of Eq.\,\eqref{SPDE} and state the main result of this paper. Section 3 and 4 are devoted to proving the main result. We provide some moment estimates for stochastic convolutions with respect to space-time white noise in the Appendix.

\section{Statement of the main result}
In this section, we first introduce the notation used throughout the paper and then state the main result.

We use the following notation in this paper. Let $G_t(x,y)$ be the fundamental solution of the heat equation with Dirichlet boundary condition. Let  $\langle \cdot, \cdot \rangle$ denote the scalar product in $L^2(0,1)$, and let $|\cdot|_{\infty}^t$ denote the uniform norm in $C([0,t]\times[0,1])$. The space $C^2_0([0,1])$ is the set of functions $\psi\in C^2([0,1])$ with $\psi(0)=\psi(1)=0$.

In order to establish the $T_2(C)$ of Eq.\,\eqref{SPDE}, we consider only the case where the process $K$ is restricted to be non-random. This is because if the reflection term is allowed to be random, it will lead to an infinite number of solutions to Eq\,\eqref{SPDE}.
For this, let us recall the definition of the deterministic flat solution of Eq.\,\eqref{SPDE} from \cite{DuanJTP2025}.
\begin{definition}{\rm(\cite[Definition 2.3]{DuanJTP2025})\label{def solution}
  A pair $(u,K)$  is called  a deterministic flat solution of the mean reflected SPDE  \eqref{SPDE} if the process $K$ is deterministic and}
\begin{itemize}
\item[(i)]  {\rm $u$ is a continuous random process on $[0,T]\times[0,1]$ with $\mathbb{E}\left[|u|_{\infty}^T\right]<\infty$ for any $T>0$; $u(t,x)$ is $\mathcal F_t$ measurable and satisfies the constraint \eqref{Restraincondition}.}
\item[(ii)] {\rm $K$ is a  measure on $\mathbb R_+\times(0,1)$ such that $K([0,T]\times(\e,1-\e))<\infty$ for every  small $\e>0$ and $T>0$.}
\item[(iii)]{\rm$(u,K)$ solves the parabolic SPDE in the following sense: for $\forall t\in \mathbb R_+$, $\varphi\in C^2_0([0,1])$,
\begin{align*}
&\langle u(t), \varphi \rangle-\int^t_0\langle u(s), \varphi'' \rangle \mathrm{d}s+\int_0^t\int_0^1   f(s,x, u) \varphi(x) \mathrm{d}s\mathrm{d}x\\
&\,=\langle u_0, \varphi \rangle+\int_0^t\int_0^1 \sigma(s,x, u)\varphi(x) W(\mathrm{d}s,\mathrm{d}x)+\int_0^t\int_0^1\varphi(x)K(\mathrm{d}s,\mathrm{d}x),\ \ \text{a.s.}.
\end{align*}}
\item[(iv)] {\rm $K$ increases only when needed. That is, $\int_0^{\infty}\int_0^1 \mathbb{E}\left[h(s,x,u(s,x))\right] K(\mathrm{d}s,\mathrm{d}x)=0$.}
\end{itemize}
\end{definition}

We next introduce the precise assumptions on the coefficients. $f$ and $\sigma$ are two measurable mappings,
\begin{align*}
f,\sigma: \mathbb{R}_+\times[0,1]\times C(\mathbb{R}_+\times[0,1])\rightarrow \mathbb{R},
\end{align*}
which satisfies
\begin{itemize}
\item[(I)]  {\rm for any $u, v\in  C(\mathbb{R}_+\times[0,1])$, $(t,x)\in\mathbb{R}_+\times[0,1]$ such that $u^t=v^t$,
\begin{align*}
f(t,x;u)=f(t,x;v)\quad\text{and}\quad
\sigma(t,x;u)=\sigma(t,x;v),
\end{align*}
where $u^t,v^t$ denote the restriction of $u,v$ to $[0,t]\times[0,1]$ respectively.}
\item[(II)] {\rm for any $T>0$, there exists constants $C_T$ and $M_T$ depending only on $T$ and the constant $M_{\sigma}$ such that for any $t\in[0,T]$, $x\in[0,1]$ and $u,v \in  C(\mathbb{R}_+\times[0,1])$,}
   \begin{align}\label{Lipschitz_fsigma}
  |f(t,x;u)-f(t,x;v)|+|\sigma(t,x;u)-\sigma(t,x;v)|\leq C_T|u-v|^t_{\infty},
  \end{align}
\begin{align}\label{bound_f}
  |f(t,x;u)|\leq M_T\left(1+|u|^t_{\infty}\right)
  \end{align}
 {\rm and}
  \begin{align}\label{bound_sigma}
  |\sigma(t,x;u)|\leq M_{\sigma}.
  \end{align}
\end{itemize}

In addition, we consider some assumptions on the constraint function $h$.

{\bf(H)} The continuous function $h: \mathbb{R}_+\times[0,1]\times\mathbb{R}\rightarrow \mathbb{R}$ is a measurable map satisfying
\begin{itemize}
\item[(i)] {\rm $\forall t\geq0$, $h(t,0,0)=h(t,1,0)=0$ and $y\mapsto h(t,x,y)$ is strictly increasing.}
\item[(ii)] {\rm $\forall x\in[0,1]$, $h(0,x,u_0(x))\geq 0$ and $h(t,x,0)\geq 0$, $h(t,x,\infty)\geq 0$ for any $(t,x)\in \mathbb{R}_+\times[0,1]$.}
 \item[(iii)] {\rm  $\forall t\in  \mathbb{R}_+$, $\forall x\in[0,1]$, $\forall y\in  \mathbb{R}$, $|h(t,x,y)|\leq C(1+|y|)$ for some positive constant $ C$. }
  \item[(iv)] {\rm $\forall t\in  \mathbb{R}_+$, $\forall x\in[0,1]$, $h$ is a bi-Lipschitz function in $y$: there exist positive constants $0<c_h\leq C_h$ such that}
\begin{align}\label{Lipschitz_h}
c_h|y-z|\leq|h(t,x,y)-h(t,x,z)|\leq C_h|y-z|,\qquad y,z\in\mathbb{R}.
\end{align}
\end{itemize}
%We assume the following precise conditions on the coefficients.
%\begin{condition}\label{con 1}
% Suppose that there are some positive constants $L_f$, $K_f$, $L_\sigma$ and $K_\sigma$ and $r\in \mathbb R$ such that, for some $\delta>0$,  $f$ and  $\sigma $ are  measurable mappings from  $\mathbb R_+\times\mathbb R$ to $ \mathbb R$ satisfying  the following conditions:
%\begin{itemize}
%\item[(I)] for every $x\in \mathbb{R}_+$, $u,v\in \mathbb R$,
%$$
%|f(x,u)-f(x,v)|\le L_f|u-v|;
%$$
%\item[(II)] for every $x\in \mathbb{R}_+$, $u \in \mathbb R$,
%$$
%|f(x,u) |\le K_f\left(e^{r x}+|u|\right);
%$$
%\item[(III)] for every $x\in \mathbb{R}_+$, $u,v\in \mathbb R$,
%$$
%|\sigma(x,u)-\sigma(x,v)|\le L_\sigma e^{-\delta x} |u-v|;
%$$
%\item[(IV)] for every $x\in\mathbb{R}_+$, $u\in \mathbb R$,
%$$
%|\sigma(x,u) |\le K_\sigma    e^{(r-\delta) x  }.
%$$
%\end{itemize}
%\end{condition}

According to     Duan, Hu and Peng \cite{DuanJTP2025}, we know the following result on the existence and uniqueness of the solution to Eq.\,\eqref{SPDE}.
\begin{proposition}{\rm (\cite[Theorem 3.3]{DuanJTP2025} and \cite[Theorem 4.4]{DuanJTP2025})\label{proposition2.1}}
\begin{itemize}
\item[(i)]{\rm
Under Assumptions (I) and (II), Eq.\,\eqref{SPDE} with the linear mean reflection admits a unique deterministic flat solution $u$.  Moreover, $\mathbb E\left[\left(|u|_{\infty}^T\right)^p\right]<\infty$ for any $p\geq1$. }
\item[(ii)]{\rm Under Assumptions (I), (II) and (H), Eq.\,\eqref{SPDE} with the general mean reflection admits a unique deterministic flat solution $u$.  Moreover, $\mathbb E\left[\left(|u|_{\infty}^T\right)^p\right]<\infty$ for any $p\geq1$. }
\end{itemize}
 \end{proposition}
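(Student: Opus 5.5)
The plan is to reduce the mean reflected problem to a fixed point for a map that couples an \emph{unreflected} SPDE with a \emph{deterministic} obstacle problem. Since $K$ is non-random, I would look for the solution in the form $u=V+z$. Here $V$ is a random field solving the equation without reflection, and $z$ is a deterministic function solving
\begin{align*}
\partial_t z-\partial_x^2 z=\dot K,\qquad z(0,\cdot)=0,\qquad z(t,0)=z(t,1)=0.
\end{align*}
The reflection then only enters through a deterministic obstacle: the law of $V$ fixes a lower barrier for $z$.

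Concretely, fix a continuous adapted field $U$ with $\mathbb E[(|U|^T_\infty)^p]<\infty$, and set
\begin{align*}
V(t,x)=\int_0^1G_t(x,y)u_0(y)\,dy-\int_0^t\!\!\int_0^1G_{t-s}(x,y)f(s,y;U)\,dy\,ds+\int_0^t\!\!\int_0^1G_{t-s}(x,y)\sigma(s,y;U)\,W(ds,dy).
\end{align*}
By the Appendix moment estimates for the stochastic convolution (factorization plus Kolmogorov) and the bounds \eqref{bound_f}--\eqref{bound_sigma}, $V$ is continuous with all moments of $|V|^T_\infty$ finite.

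Next I build the barrier. For each $(t,x)$ put $H(t,x,c)=\mathbb E[h(t,x,V(t,x)+c)]$. By (H)(iv), $c\mapsto H(t,x,c)$ is strictly increasing and bi-Lipschitz with constants $c_h\le C_h$, so it has a unique zero $L(t,x)$. In the linear case $h(t,x,y)=y$ this is simply $L=-\mathbb E[V]$. The barrier $L$ has three properties I need:
\begin{itemize}
\item It is continuous, by continuity of $h$ and dominated convergence using (H)(iii).
\item It satisfies $L(t,0)=L(t,1)=0$, because $V$ vanishes at the boundary and $h(t,0,0)=h(t,1,0)=0$.
\item It satisfies $L(0,x)\le 0=z(0,x)$, because $V(0,x)=u_0(x)$ and $h(0,x,u_0(x))\ge0$.
\end{itemize}
The constraint \eqref{Restraincondition} for $u=V+z$ is then exactly $z\ge L$. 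So I take $(z,K)$ to be the minimal solution of the deterministic obstacle problem for the heat equation with obstacle $L$, as in Nualart--Pardoux. This gives $K$ supported on $\{z=L\}$, hence $\int\!\!\int \mathbb E[h(s,x,u)]\,K(ds,dx)=0$. Define $\Phi(U)=V+z$. By construction, a fixed point of $\Phi$ is a deterministic flat solution, and conversely any deterministic flat solution has this form.

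For the contraction I need two stability estimates, and I expect these to be the main obstacle. The first is the Lipschitz continuity of the deterministic obstacle problem in the uniform norm,
\begin{align*}
|z_1-z_2|^t_\infty\le|L_1-L_2|^t_\infty,
\end{align*}
which I would obtain from the comparison principle for penalized approximations. The second is
\begin{align*}
|L_1(t,x)-L_2(t,x)|\le \frac{C_h}{c_h}\,\mathbb E|V_1(t,x)-V_2(t,x)|,
\end{align*}
which follows from the bi-Lipschitz property by comparing $H_1(t,x,L_2)$ with $H_2(t,x,L_2)=0$.

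Together these give
\begin{align*}
|\Phi(U_1)-\Phi(U_2)|^t_\infty\le|V_1-V_2|^t_\infty+\frac{C_h}{c_h}\,\mathbb E\big[|V_1-V_2|^t_\infty\big].
\end{align*}
The difficulty is that the right-hand side mixes pathwise and expected sup norms, but taking $p$-th moments closes the estimate. By \eqref{Lipschitz_fsigma} and the stochastic convolution moment bounds,
\begin{align*}
\mathbb E\big[(|V_1-V_2|^t_\infty)^p\big]\le C\int_0^t\mathbb E\big[(|U_1-U_2|^s_\infty)^p\big](t-s)^{\alpha}\,ds
\end{align*}
for some $\alpha>-1$. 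Iterating this bound, or working on short time intervals and concatenating, yields existence and uniqueness via Picard iteration and Gronwall.

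Finally, the moment bound $\mathbb E[(|u|^T_\infty)^p]<\infty$ follows from the corresponding bound for $V$ together with two deterministic facts:
\begin{align*}
|z|^T_\infty\le|L^+|^T_\infty\le C\big(1+\mathbb E[|V|^T_\infty]\big),
\end{align*}
where the first inequality is the obstacle-problem comparison and the second uses (H)(iii)--(iv).
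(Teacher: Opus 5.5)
Your proposal is correct and follows essentially the route of the Duan--Hu--Peng proof that the paper cites. You split $u$ into the unreflected SPDE part and a deterministic obstacle problem whose barrier is fixed by the law (the zero of $c\mapsto\mathbb{E}[h(t,x,V+c)]$, as in Briand--Elie--Hu). You then use the stability bound $|\bar z_1-\bar z_2|^T_\infty\le \frac{C_h}{c_h}\,\mathbb{E}\big[|v_1-v_2|^T_\infty\big]$ of Lemma~\ref{Lemmestz1z2} and Remark~\ref{Remark1}, and close by Picard iteration.

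Two small touch-ups. First, in the linear case $h(t,x,Y)=Y-y(t,x)$ the barrier is $y-\mathbb{E}[V]$, not $-\mathbb{E}[V]$. Second, for uniqueness among flat solutions known only to satisfy $\mathbb{E}[|u|^T_\infty]<\infty$, note that boundedness of $\sigma$ plus a pathwise Gronwall bound on the drift gives every such solution finite moments of all orders, so your $p$-th moment contraction applies.
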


Define
 \begin{align*}
C_{T,2,\e}=\inf_{q>10}\frac{2}{q-2}q^{-\frac{q}{2}}\e^{1-\frac{q}{2}}\left(q-2+qC_{T,q}\right)^{\frac{q}{2}}
\end{align*}
with
\begin{align*}
C_{T,q}<q^{\frac{q}{2}}T^{\frac{q}{4}-\frac{3}{2}}\left(\frac{2}{\pi}\right)^q
\left(\frac{1}{\sqrt{2\pi}}\right)^{\frac{q}{2}+1}\left(\frac{6q-8}{q-10}\right)^{\frac{3q}{2}-2},
\end{align*}
\begin{equation}\label{C1_Define}
\begin{split}
C_1:=&\, \inf\limits_{\substack{0<\e<\frac{1}{12C_T^2}}}
\Bigg\{\frac{12\sqrt{\frac{2T}{\pi}}M_\sigma^2}{1-12\e C_T^2}\cdot \exp{\Bigg(\frac{12C_T^2\left(\sqrt{\frac{2T}{\pi}}+C_{T,2,\e}\right)}{1-12\e C_T^2}\Bigg)}\Bigg\},
\end{split}
\end{equation}
\begin{equation}\label{C2_Define}
\begin{split}
C_2:=&\, \inf\limits_{\substack{0<\e<\frac{1}{6\left(1+C_h/c_h\right)C_T^2}}}
\Bigg\{\frac{6\left(1+C_h/c_h\right)\sqrt{\frac{2T}{\pi}}M_\sigma^2}{1-6\left(1+C_h/c_h\right)\e C_T^2}\\
&\qquad\qquad\qquad\quad\cdot \exp{\Bigg(\frac{6\left(1+C_h/c_h\right)C_T^2\left(\sqrt{\frac{2T}{\pi}}+C_{T,2,\e}\right)}{1-6\left(1+C_h/c_h\right)\e C_T^2}\Bigg)}\Bigg\}.
\end{split}
\end{equation}
We now present the main results of this paper.
\begin{theorem}\label{maintheorem1}
Suppose that Assumptions (I) and (II) hold.   Then the law $\mu$ of the solution to  Eq.\,\eqref{SPDE} with the linear mean reflection satisfies the quadratic transportation cost inequality $T_2(C_1)$ with the constant $C_1$ defined by \eqref{C1_Define} on the space $C\left([0,T]\times[0,1]\right)$.
\end{theorem}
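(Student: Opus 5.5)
We follow the Girsanov-transform approach of Djellout--Guillin--Wu as adapted to the uniform norm by Shang--Zhang. Fix $\nu\ll\mu$ on $C([0,T]\times[0,1])$ with $H(\nu|\mu)<\infty$; otherwise there is nothing to prove. Set $\mathbb{Q}:=\frac{\mathrm{d}\nu}{\mathrm{d}\mu}(u)\,\mathbb{P}$, so that $H(\mathbb{Q}|\mathbb{P})=H(\nu|\mu)$. By the martingale representation theorem for space-time white noise there is a predictable $\eta$ with
\[
\tfrac12\mathbb{E}^{\mathbb{Q}}\int_0^T\!\!\int_0^1\eta^2\,\mathrm{d}s\,\mathrm{d}x=H(\nu|\mu),
\]
and $\tilde W(\mathrm{d}s,\mathrm{d}x)=W(\mathrm{d}s,\mathrm{d}x)-\eta(s,x)\,\mathrm{d}s\,\mathrm{d}x$ is a white noise under $\mathbb{Q}$. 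Under $\mathbb{Q}$, $u$ solves the mild equation
\[
u=G u_0-\int G f(u)+\int G\sigma(u)\,\tilde W(\mathrm{d}s,\mathrm{d}x)+\int G\sigma(u)\,\eta\,\mathrm{d}s\,\mathrm{d}x+\int G\,K(\mathrm{d}s,\mathrm{d}x),
\]
where $K$ is the deterministic reflection measure built from the $\mathbb{P}$-law of $u$. Let $v$ be the solution, on the same $\mathbb{Q}$-space, of the mean reflected SPDE driven by $\tilde W$ with its own deterministic compensator $\tilde K$ and with the constraint taken under $\mathbb{Q}$. By Proposition \ref{proposition2.1} and uniqueness in law, the $\mathbb{Q}$-law of $v$ is $\mu$, while the $\mathbb{Q}$-law of $u$ is $\nu$. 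Hence $(u,v)$ under $\mathbb{Q}$ is a coupling, and
\[
W_2(\nu,\mu)^2\le \mathbb{E}^{\mathbb{Q}}\bigl[(|u-v|^T_\infty)^2\bigr].
\]

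The core estimate is a bound on $\mathbb{E}^{\mathbb{Q}}[(|u-v|^t_\infty)^2]$. Write $u-v=I_1+I_2+I_3+I_4$:
\begin{itemize}
\item $I_1$ is the drift difference, bounded by $\sqrt{2T/\pi}\,C_T\int_0^t|u-v|^s_\infty\,\mathrm{d}s$, using $\int_0^1 G\le1$ and $\int_0^t\sup_x\int_0^1 G_s(x,y)^2\,\mathrm{d}y\,\mathrm{d}s\le\sqrt{2t/\pi}$.
\item $I_2$ is the stochastic convolution of $\sigma(u)-\sigma(v)$ against $\tilde W$.
\item $I_3=\int G\sigma(u)\eta$ is the Girsanov term. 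By Cauchy--Schwarz and \eqref{bound_sigma}, $|I_3|^2\le M_\sigma^2\sqrt{2T/\pi}\int\!\!\int\eta^2$, which produces $M_\sigma^2\sqrt{2T/\pi}\cdot 2H(\nu|\mu)$.
\item $I_4=\int G\,(K-\tilde K)$ is the reflection difference.
\end{itemize}
For the linear mean reflection ($h(t,x,y)=y$, which is how we read "linear"), the explicit construction in \cite{DuanJTP2025} expresses $K$ through a running supremum-type projection of $\mathbb{E}[\cdot]$ of the unreflected part. This gives
\[
|I_4(t,x)|\le \sup_{s\le t,\,y}\bigl|\mathbb{E}^{\mathbb{Q}}[\text{unreflected difference}]\bigr|\le \mathbb{E}^{\mathbb{Q}}\bigl[|u-v|^t_\infty\bigr],
\]
i.e.\ the reflection difference is controlled by the expectation of the uniform norm. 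This accounts for the doubled constant ($12$ versus $6$). Splitting $(a_1+\dots+a_4)^2$ with the appropriate weights gives the coefficients $12C_T^2$ and $12M_\sigma^2$.

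For $I_2$ we need $\mathbb{E}\sup_{s\le t,x}|I_2|^2\le \e\,C_T^2\,\mathbb{E}(|u-v|^t_\infty)^2+C_{T,2,\e}C_T^2\int_0^t\mathbb{E}(|u-v|^s_\infty)^2\,\mathrm{d}s$. This low-order ($p=2$) maximal inequality for stochastic convolutions is the main obstacle, since the Garsia--Kolmogorov factorization only yields moments of order $q>10$. We plan to prove it in the Appendix by applying the $q$-th moment bound with constant $C_{T,q}$ to the convolution stopped at the level sets of $|u-v|$, and then converting to $p=2$ via a Lenglart-type domination inequality. Optimizing over $q>10$ gives exactly the constant $C_{T,2,\e}$.

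Collecting terms, with $\phi(t)=\mathbb{E}^{\mathbb{Q}}(|u-v|^t_\infty)^2$ we get
\[
(1-12\e C_T^2)\,\phi(t)\le 24\sqrt{2T/\pi}\,M_\sigma^2\,H(\nu|\mu)+12C_T^2\bigl(\sqrt{2T/\pi}+C_{T,2,\e}\bigr)\int_0^t\phi.
\]
Gronwall's lemma then gives $\phi(T)\le 2C_1H(\nu|\mu)$ after taking the infimum over $\e$, which is $T_2(C_1)$. A technical point we must still check is that $H(\mathbb{Q}|\mathbb{P})<\infty$ gives enough integrability for $\phi$ to be finite, so that Gronwall applies. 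We would handle this by localizing with stopping times and using the moment bounds of Proposition \ref{proposition2.1}.
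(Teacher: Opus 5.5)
Your strategy is the paper's: the Girsanov representation of Lemma~\ref{numupq}, the coupling of $u$ with the mean reflected solution $v$ driven by $\tilde W$ under $\mathbb{Q}$, the split into drift, stochastic convolution, Girsanov and reflection terms, Lemma~\ref{lem int34} at $p=2$, and Gronwall. The paper simply cites Shang--Zhang for Lemma~\ref{lem int34}, so you need not re-derive it.

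\textbf{The reflection step fails as written.} You bound $|I_4|$ by $\mathbb{E}^{\mathbb{Q}}[|u-v|^t_\infty]$. Then $|u-v|^t_\infty\le |I_1+I_2+I_3|^t_\infty+\mathbb{E}^{\mathbb{Q}}[|u-v|^t_\infty]$. After squaring and taking $\mathbb{E}^{\mathbb{Q}}$, $\phi(t)$ reappears on the right with coefficient at least one, so nothing can be absorbed and Gronwall never starts.

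The correct tool is the Lipschitz estimate for the deterministic obstacle problem (\cite[Theorem 1.4]{NP1992}): for obstacles $-\psi_1,-\psi_2$ it gives $|\bar z_1-\bar z_2|^t_\infty\le|\psi_1-\psi_2|^t_\infty$. This is not a running-supremum formula; none is available in this space-time setting. Write $u=z+\bar z$ and $v=\tilde z+\tilde{\bar z}$. The estimate controls the reflection difference by the mean of the \emph{unreflected} difference $z-\tilde z=I_1+I_2+I_3$. Hence $|u-v|^T_\infty\le|z-\tilde z|^T_\infty+\mathbb{E}^{\mathbb{Q}}[|z-\tilde z|^T_\infty]$, and so $\phi(T)\le 4\,\mathbb{E}^{\mathbb{Q}}[(|z-\tilde z|^T_\infty)^2]\le 12(\cdots)$. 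This is the paper's \eqref{4.14uubar}, and it is the only way your factor $12$ actually arises.

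\textbf{There is also a measure mismatch in the same step.} The obstacle for $u$ is $\mathbb{E}^{\mathbb{P}}[z]-y$; you yourself note that $K$ is built from the $\mathbb{P}$-law. The obstacle for $v$ is $\mathbb{E}^{\mathbb{Q}}[\tilde z]-y$. So the obstacle estimate yields $\sup|\mathbb{E}^{\mathbb{P}}[z]-\mathbb{E}^{\mathbb{Q}}[\tilde z]|$, which is not a $\mathbb{Q}$-expectation of a difference.

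The clean resolution is that $\tilde K=K$. Consider the equation driven by $\tilde W$ with the fixed deterministic forcing $K$. It has a pathwise unique solution $w$, and the $\mathbb{Q}$-law of $w$ equals the $\mathbb{P}$-law of $u$. So $(w,K)$ satisfies the constraint and the flatness condition under $\mathbb{Q}$. Uniqueness of deterministic flat solutions then forces $v=w$ and $\tilde K=K$. This also fixes the unspecified $\mathbb{E}$ in the paper's own display. Thus $I_4\equiv 0$, and the argument closes with $3$ in place of $12$.

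\textbf{The Gronwall constant is not quite $C_1$.} Gronwall gives a factor $\exp(BT)$ with $B=12C_T^2(\sqrt{2T/\pi}+C_{T,2,\varepsilon})/(1-12\varepsilon C_T^2)$. The constant in \eqref{C1_Define} has $\exp(B)$ instead. The paper makes the same omission, so you cannot claim to land exactly on $C_1$ when $T>1$.
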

\begin{theorem}\label{maintheorem2}
Suppose that Assumptions (I), (II) and (H) hold.  Then the law $\mu$ of the solution to  Eq.\,\eqref{SPDE} with the general mean reflection satisfies the quadratic transportation cost inequality $T_2(C_2)$ with the constant $C_2$ defined by \eqref{C2_Define} on the space $C\left([0,T]\times[0,1]\right)$.
\end{theorem}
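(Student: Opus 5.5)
We prove Theorem \ref{maintheorem2} by the Girsanov approach of Djellout--Guillin--Wu, in the uniform-norm form of Shang--Zhang. Fix $\nu\ll\mu$ on $C([0,T]\times[0,1])$ with $H(\nu|\mu)<\infty$; otherwise there is nothing to prove. Set $\frac{\mathrm d\mathbb Q}{\mathrm d\mathbb P}:=\frac{\mathrm d\nu}{\mathrm d\mu}(u)$. By the martingale representation for space-time white noise there is a predictable $\phi$ with
\[
\tilde W(\mathrm dt,\mathrm dx)=W(\mathrm dt,\mathrm dx)-\phi(t,x)\,\mathrm dt\,\mathrm dx
\]
a $\mathbb Q$-white noise, and
\[
H(\nu|\mu)=H(\mathbb Q|\mathbb P)=\tfrac12\mathbb E_{\mathbb Q}\int_0^T\!\!\int_0^1\phi^2\,\mathrm dx\,\mathrm dt .
\]
Under $\mathbb Q$ the solution $u$ satisfies the same equation with $\tilde W$, the extra drift $\sigma\phi$, and the same deterministic $K$. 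Its law is $\nu$, but $K$ is no longer the compensator for the constraint under $\mathbb Q$. We therefore let $(v,K^{\mathbb Q})$ be the unique deterministic flat solution of \eqref{SPDE} driven by $\tilde W$ on $(\Omega,\mathbb Q)$, with the constraint $\mathbb E_{\mathbb Q}[h(t,x,v)]\ge0$. It exists by Proposition \ref{proposition2.1}(ii). The law of $v$ under $\mathbb Q$ is $\mu$, so $(v,u)$ under $\mathbb Q$ is a coupling of $(\mu,\nu)$ and
\[
W_2(\mu,\nu)^2\le\mathbb E_{\mathbb Q}\big[(|u-v|_\infty^T)^2\big].
\]

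Write the mild forms and subtract:
\[
u-v=\int G(f(v)-f(u))+\int G(\sigma(u)-\sigma(v))\,\tilde W+\int G\,\sigma(u)\phi+\int G\,(K-K^{\mathbb Q}).
\]
The reflection difference is the key term. As in \cite{DuanJTP2025} for the general case, the deterministic flat solution is built by a map $L_t$ that sends the law of the unreflected part to the minimal deterministic shift enforcing $\mathbb E[h]\ge0$. Using the bi-Lipschitz bound \eqref{Lipschitz_h} and the monotonicity in (H)(i), one shows that the reflection contributions differ in uniform norm by at most $\frac{C_h}{c_h}\,\mathbb E_{\mathbb Q}|U-V|_\infty^t$, where $U,V$ are the corresponding unreflected processes. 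This gives
\[
|u-v|_\infty^t\le |U-V|_\infty^t+\tfrac{C_h}{c_h}\,\mathbb E_{\mathbb Q}|U-V|_\infty^t ,
\]
which after Jensen's inequality explains the factor $1+C_h/c_h$ in $C_2$. Squaring and splitting the sum into three terms, each with constant $3$, is where the prefactor $6(1+C_h/c_h)$ comes from.

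We then estimate the three terms of $U-V$.
\begin{itemize}
\item Drift: by \eqref{Lipschitz_fsigma} and $\int_0^1G_{t}(x,y)\,\mathrm dy\le 1$, together with Cauchy--Schwarz using $\int_0^t\!\int G^2\le\sqrt{2t/\pi}$, it is bounded by $C_T^2\sqrt{2T/\pi}\int_0^t\mathbb E(|u-v|_\infty^s)^2\,\mathrm ds$.
\item Girsanov term: by \eqref{bound_sigma} and the same bound, it is at most $M_\sigma^2\sqrt{2T/\pi}\,\mathbb E_{\mathbb Q}\int\!\!\int\phi^2=2M_\sigma^2\sqrt{2T/\pi}\,H(\nu|\mu)$.
\item Stochastic convolution: we need $\mathbb E\sup_{t,x}|\int G(\sigma(u)-\sigma(v))\tilde W|^2$ bounded by $\e\,\mathbb E(|u-v|_\infty^t)^2+C_{T,2,\e}\int_0^t\mathbb E(|u-v|_\infty^s)^2\,\mathrm ds$.
\end{itemize}
Once all three bounds are in place, we absorb the $\e$ term (this needs $\e<1/(6(1+C_h/c_h)C_T^2)$) and apply Gronwall. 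This gives $\mathbb E_{\mathbb Q}(|u-v|_\infty^T)^2\le 2C_2H(\nu|\mu)$, and optimizing over $\e$ yields the claim.

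The main obstacle is the stochastic convolution bound with order $2$. The Garsia--Rodemich--Rumsey/factorization moment estimate in the Appendix holds only for $q>10$, with constant $C_{T,q}$. We first get the $q$-th moment estimate and then convert it to a second-moment estimate via a stopping/layer-cake argument, the $p$-th moment inequality valid for all $p>0$. This uses $\mathbb E X^{2}\le \e\, \mathbb E Y^2+\dots$ together with Young's inequality $ab\le \e a^{2/q'}+\dots$, and optimizing produces $C_{T,2,\e}$. A secondary point is making the comparison of the two reflections rigorous under $\mathbb Q$, since both constraints are taken in $\mathbb E_{\mathbb Q}$. For this we use that $K$ and $K^{\mathbb Q}$ both depend only on the one-dimensional marginal laws through the deterministic operator of \cite{DuanJTP2025}, and we apply its Lipschitz estimate with the $\mathbb Q$-laws of $U$ and $V$.
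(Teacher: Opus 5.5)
Your outline follows the paper's proof almost line by line. Both use the Girsanov representation of Lemma~\ref{numupq}, couple $u$ with the mean-reflected solution driven by $\tilde W$ under $\mathbb Q$, compare the reflections with Lemma~\ref{Lemmestz1z2} ($l=C_h/c_h$), estimate three terms with Lemma~\ref{lem int34} at $p=2$, then absorb and apply Gronwall. However, the reflection step does not go through as you justify it. The measure $K$ is the reflection produced by the \emph{$\mathbb P$-law} of $U$, whereas $K^{\mathbb Q}$ is produced by the $\mathbb Q$-law of $V$. Lemma~\ref{Lemmestz1z2} compares two obstacle problems whose constraints are taken under one and the same expectation. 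So it cannot be applied ``with the $\mathbb Q$-laws of $U$ and $V$'': $K$ is not the reflection attached to the $\mathbb Q$-law of $U$, which differs from its $\mathbb P$-law. Your sentence that both constraints are taken in $\mathbb E_{\mathbb Q}$ contradicts your own correct remark that $K$ is not the $\mathbb Q$-compensator. The paper passes over the same point by writing an unspecified $\mathbb E$. The constant bookkeeping is also off. Even granting $|u-v|^t_\infty\le a+l\,\mathbb E_{\mathbb Q}a$ with $a=|U-V|^t_\infty$, the best you get is $\mathbb E_{\mathbb Q}(|u-v|^t_\infty)^2\le(1+l)^2\,\mathbb E_{\mathbb Q}a^2$, which is sharp for deterministic $a$. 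That gives the prefactor $3(1+C_h/c_h)^2$, which exceeds $6(1+C_h/c_h)$ whenever $C_h>c_h$.

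The missing observation is that the two reflections coincide, so no Lipschitz estimate is needed. Take expectations in the mild forms; the stochastic integrals are centred because $\sigma$ is bounded. This gives
\[
\int_0^t\!\!\int_0^1G_{t-s}(x,y)\,K(\mathrm ds,\mathrm dy)=\mathbb E_{\mathbb P}[u(t,x)]-\int_0^1G_t(x,y)u_0(y)\,\mathrm dy+\mathbb E_{\mathbb P}\int_0^t\!\!\int_0^1G_{t-s}(x,y)f(s,y;u)\,\mathrm ds\,\mathrm dy,
\]
and the same identity holds for $(v,K^{\mathbb Q})$ under $\mathbb Q$. The right-hand side depends only on the law of the solution, which is $\mu$ in both cases; that is exactly the fact you already use to call $(v,u)$ a coupling. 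Hence $u-v=U-V$ identically and the prefactor is $3$. Since the expression in \eqref{C2_Define} is increasing in this prefactor, the prefactor is no obstacle to reaching $C_2$. Two smaller points remain. First, before absorbing the $\e$-term you need $\mathbb E_{\mathbb Q}(|u-v|^T_\infty)^2<\infty$. The $\mathbb Q$-law of $u$ is $\nu$, whose second moment is not known a priori, so localize with stopping times. Second, Gronwall turns $X(t)\le\alpha+\beta\int_0^tX(s)\,\mathrm ds$ into $X(T)\le\alpha e^{\beta T}$, not $\alpha e^{\beta}$ as in the displayed constant. You therefore have to track the $T$-dependence of the exponent when you optimize over $\e$.
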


\section{Proof of Theorem \ref{maintheorem1}}
In this section, we prove Theorem \ref{maintheorem1} for a special type of linear mean reflection, given by the function $h(t,x,Y)= Y-y(t,x)$. This means  the constraint takes the form
\begin{align}\label{Bound_ugeqy1}
\mathbb{E}\left[u(t,x)\right]\geq y(t,x),\qquad (t,x)\in\mathbb{R}_+\times[0,1],
\end{align}
where $y$ is a deterministic continuous map from $\mathbb{R}_+\times[0,1]$ to $\mathbb{R}$. Throughout this section, we assume $y(0,x)=0$ and $y(t,0)=y(t,1)=0$.

Before proving Theorem \ref{maintheorem1}, we first study the following deterministic parabolic obstacle problem
\begin{equation}\label{deterministicparabolicobstacleEq}
\left\{\begin{split}
   &\frac{\partial \bar{z}}{\partial t}(t, x)- \frac{\partial^2 \bar{z}}{ \partial x^2} (t,x)= \dot{ K}(t, x)\\
   &\bar{z}(t,x)\geq -v(t,x),\\
    &\int^T_0\int^1_0(\bar{z}+v)(t, x)K(\mathrm{d}t,\mathrm{d}x)=0,\qquad \text{for}\,\,\text{all}\,\, T>0.
 \end{split}\right.
\end{equation}
Here, the function $v$ belongs to $C([0,T]\times[0,1])$ and satisfies $v(0,x)=u_0(x)$.
Recall the following definition for the deterministic obstacle problem from Duan, Hu and Peng \cite{DuanJTP2025}.

\begin{definition}{\rm (\cite[Definition 3.1]{DuanJTP2025})\label{def3.1teach} A pair $(\bar{z},K)$ is called a solution to Eq.\,\eqref{deterministicparabolicobstacleEq} if it satisfies the following conditions}
\begin{itemize}
\item[(i)] {\rm $\bar{z}$ is continuous function on $[0,T]\times[0,1]$ satisfying $\bar{z}(t,x)\geq-v(t,x)$, $\bar{z}(t,0)=\bar{z}(t,1)=0  \text{ and } \bar{z}(0,x)=0$.}
\item[(ii)] {\rm $K$ \text{is a measure on} $(0,1)\times\mathbb R_+$ \text{such that}  $K\left((\e,1-\e)\times[0,T]\right)<\infty$ for every small $\e>0$ and $T>0$.}
\item[(iii)] {\rm $\bar{z}$ weakly solves the partial differential equation}
{\rm \begin{align}\label{eq DOP}
\frac{\partial \bar{z}}{\partial t}=\frac{\partial^2 \bar{z}}{\partial x^2}+\dot{K}.
\end{align}
\text{That is, for every} $t\in [0,T]$ and every $\varphi\in  C_0^2 ([0,1])$,
$$
\int_0^1 \bar{z}(t,x)\varphi(x)\mathrm{d}x=\int_0^t \int_0^1 \bar{z}(s,x)\varphi''(x)\mathrm{d}s\mathrm{d}x+\int_0^t \int_0^1 \varphi(x)K(\mathrm{d}s,\mathrm{d}x).
$$}
\item[(iv)] {\rm
$$
\int_0^t \int_0^1 \left(\bar{z}(s,x)+v(s,x)\right)K(\mathrm{d}s,\mathrm{d}x)=0
$$}
\end{itemize}
\end{definition}

In order to prove Theorem \ref{maintheorem1}, we need the following lemma describing all probability measures which are absolutely continuous with respect to $\mu$. Let $\nu\ll\mu$ on  $C([0,T]\times[0,1])$.  Define a new probability measure $\mathbb{Q}$ on the filtered probability space $(\Omega,\mathcal{F},\{\mathcal{F}\}_{0\leq t\leq T},\mathbb{P})$ by
\begin{align}\label{dQdP}
\mathrm{d}\mathbb{Q}:=\frac{\mathrm{d}\nu}{\mathrm{d}\mu}(u)\mathrm{d}\mathbb{P}.
\end{align}
Denote the Radon-Nikodym derivative restricted on $\mathcal{F}_t$ by
\begin{align*}
M_t:=\frac{\mathrm{d}\mathbb{Q}}{\mathrm{d}\mathbb{P}}\Big|_{\mathcal{F}_t},\,\,t\in[0,T].
\end{align*}
Then $\{M_t\}_{t\in[0,T]}$ is a nonnegative $\mathbb{P}$-martingale. The following lemma from \cite[Lemma 3.1]{KS2019} is crucial for our proof.
\begin{lemma}\label{numupq}{\rm(\cite[Lemma 3.1]{KS2019})}
{\rm There exists an adapted  random field $g=\left\{g(s,x)\right\}_{(s,x)\in[0,T]\times[0,1]}$ such that $\mathbb{Q}$-a.s. for all $t\in[0,T]$,}
\begin{align*}
\int^{t}_0\int_0^1 g^2(s,x)\mathrm{d}s\mathrm{d}x<\infty,
\end{align*}
{\rm and $\widetilde{W}:[0,T]\times[0,1]\rightarrow\RR$ defined by}
\begin{equation}\label{widetildeWbiaoshi}
\begin{split}
\widetilde{W}(t,x):=W(t,x)-\int^t_0\int_0^xg(s,y)\mathrm{d}s\mathrm{d}y,
\end{split}
\end{equation}
{\rm is a Brownian sheet under the measure $\mathbb{Q}$.
Moreover,}
\begin{align*}
M_t=\exp\left(\int^t_0\int^1_0g(s,x){W}(\mathrm{d}s,\mathrm{d}x)-\frac{1}{2}\int^{{t}}_0\int^1_0 g^2(s,x)\mathrm{d}s\mathrm{d}x\right),\,\,\mathbb{Q}\text{-a.s.},
\end{align*}
\begin{align}\label{Hnvmu}
H(\nu|\mu)=\frac{1}{2}\mathbb{E}^{\mathbb{Q}}\left[\int^T_0\int^1_0 g^2(s,x)\mathrm{d}s\mathrm{d}x\right],
\end{align}
{\rm where $\mathbb{E}^{\mathbb{Q}}$ stands for the expectation under the measure $\mathbb{Q}$.}
\end{lemma}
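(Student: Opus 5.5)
The plan is to run the classical Föllmer-type argument, now for the Brownian sheet filtration $\{\mathcal F_t\}$. The key structural fact is that $\{\mathcal F_t\}$ is generated by $W$. In that case every square-integrable, and by localization every, $\mathbb P$-martingale has a representation as a stochastic integral against the white noise. I will apply this to the nonnegative martingale $M_t=\mathbb E[\frac{\mathrm d\nu}{\mathrm d\mu}(u)\,|\,\mathcal F_t]$. Since the martingale $M$ then has continuous paths, this gives an adapted field $\varphi$ with $\int_0^t\int_0^1\varphi^2\,\mathrm ds\,\mathrm dx<\infty$ a.s. and
\begin{align*}
M_t=1+\int_0^t\int_0^1\varphi(s,x)\,W(\mathrm ds,\mathrm dx).
\end{align*}

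\textbf{Step 1: define $g$ and the exponential form.} Let $\tau_n=\inf\{t: M_t\le 1/n\}\wedge T$. Because $\mathbb Q(M_T=0)=\mathbb E[M_T\mathbf 1_{\{M_T=0\}}]=0$ and $M$ is a continuous nonnegative martingale, $\mathbb Q$-a.s. $M$ stays strictly positive on $[0,T]$, so $\tau_n=T$ for $n$ large. I set $g:=\varphi/M$ on $\{M>0\}$ (and $0$ elsewhere). On $[0,\tau_n]$ the process $1/M$ is bounded by $n$, which gives $\int_0^{t}\int_0^1 g^2<\infty$ $\mathbb Q$-a.s. Itô's formula for $\log M$ on $[0,\tau_n]$ then gives
\begin{align*}
\log M_t=\int_0^t\int_0^1 g\,\mathrm dW-\frac12\int_0^t\int_0^1 g^2\,\mathrm ds\,\mathrm dx,
\end{align*}
and letting $n\to\infty$ yields the exponential formula $\mathbb Q$-a.s.

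\textbf{Step 2: Girsanov.} For each Borel $A\subseteq[0,1]$, consider the continuous $\mathbb P$-local martingale $W(t,A)$. Its covariation with $M$ is $\int_0^t\int_A\varphi\,\mathrm ds\,\mathrm dx$. Girsanov's theorem (localized along $\tau_n$) therefore shows that $\widetilde W(t,A)=W(t,A)-\int_0^t\int_A g\,\mathrm ds\,\mathrm dy$ is a continuous $\mathbb Q$-local martingale. Its quadratic variation is unchanged, namely $t\,m(A)$, and for disjoint $A,B$ the cross-variation vanishes. Lévy's characterization, applied to finitely many disjoint sets, then shows that $\widetilde W$ is a $\mathbb Q$-Brownian sheet, which is \eqref{widetildeWbiaoshi}.

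\textbf{Step 3: entropy identity.} Since $\mathrm d\mathbb Q/\mathrm d\mathbb P=\frac{\mathrm d\nu}{\mathrm d\mu}(u)$ is $\sigma(u)$-measurable and $\mathbb P\circ u^{-1}=\mu$, $\mathbb Q\circ u^{-1}=\nu$, we get
\begin{align*}
H(\mathbb Q|\mathbb P)=\mathbb E^{\mathbb Q}\Big[\log\tfrac{\mathrm d\nu}{\mathrm d\mu}(u)\Big]=H(\nu|\mu).
\end{align*}
Rewriting the exponential formula with $\widetilde W$ gives
\begin{align*}
\log M_t=\int_0^t\int_0^1 g\,\mathrm d\widetilde W+\frac12\int_0^t\int_0^1 g^2\,\mathrm ds\,\mathrm dx.
\end{align*}
I would now localize further with $\sigma_n=\tau_n\wedge\inf\{t:\int_0^t\int_0^1 g^2\ge n\}$, so that the $\widetilde W$-integral has zero $\mathbb Q$-mean. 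This gives $\mathbb E^{\mathbb Q}[\log M_{\sigma_n}]=\frac12\mathbb E^{\mathbb Q}\int_0^{\sigma_n}\int_0^1 g^2$. The right-hand side increases to $\frac12\mathbb E^{\mathbb Q}\int_0^T\int_0^1 g^2$ by monotone convergence.

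\textbf{Main obstacle.} The delicate step is passing to the limit on the left-hand side, i.e.\ showing $\mathbb E^{\mathbb Q}[\log M_{\sigma_n}]\to H(\mathbb Q|\mathbb P)$, including the case where both sides are infinite. I would handle this in two directions:
\begin{itemize}
\item For the bound $\le$, note that $\mathbb E^{\mathbb Q}[\log M_{\sigma_n}]=H(\mathbb Q|_{\mathcal F_{\sigma_n}}\,|\,\mathbb P|_{\mathcal F_{\sigma_n}})\le H(\mathbb Q|\mathbb P)$, since entropy decreases under restriction to sub-$\sigma$-fields.
\item For the bound $\ge$, use lower semicontinuity of relative entropy together with $\mathcal F_{\sigma_n}\uparrow\mathcal F_T$ (equivalently, Fatou's lemma applied to $M_{\sigma_n}\log M_{\sigma_n}\ge -e^{-1}$ under $\mathbb P$).
\end{itemize}
Together these yield \eqref{Hnvmu}.
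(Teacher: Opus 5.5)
Your argument is correct and follows essentially the standard F\"ollmer/Djellout--Guillin--Wu scheme behind \cite[Lemma 3.1]{KS2019}: martingale representation in the Brownian-sheet filtration, It\^o's formula for $\log M$ and Girsanov up to $\tau_n$, then a localized entropy computation; the paper itself gives no proof and only cites that lemma. The one point to tighten is the last limit: $\sigma_n\uparrow T$ only $\mathbb{Q}$-a.s.\ (it can stall on the possibly $\mathbb{P}$-nonnull set $\{M_T=0\}$), so $\mathcal{F}_{\sigma_n}\uparrow\mathcal{F}_T$ is not literally true, but L\'evy's upward theorem gives $M_{\sigma_n}\to M_\infty:=\mathbb{E}[M_T\,|\,\bigvee_n\mathcal{F}_{\sigma_n}]$ $\mathbb{P}$-a.s., and since $M_\infty=M_T$ $\mathbb{Q}$-a.s., Fatou under $\mathbb{P}$ yields $\liminf_n\mathbb{E}^{\mathbb{Q}}[\log M_{\sigma_n}]\ge\mathbb{E}^{\mathbb{P}}[M_\infty\log M_\infty]=\mathbb{E}^{\mathbb{Q}}[\log M_T]=H(\mathbb{Q}|\mathbb{P})$.
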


We now prove Theorem \ref{maintheorem1}.

\begin{proof}[Proof of Theorem \ref{maintheorem1}]
In view of the globally Lipschitz condition of the coefficients $f$ and $\sigma$, it follows from \cite{Walsh1986} that
\begin{align*}
z(t,x):=&\int^1_0G_t(x,y)u_0(y)\mathrm{d}y-\int^t_0\int^1_0G_{t-s}(x,y)f(s,y,u)\mathrm{d}s\mathrm{d}y\\
&+\int^t_0\int^1_0G_{t-s}(x,y)\sigma(s,y,u)W(\mathrm{d}s,\mathrm{d}y),
\end{align*}
satisfies the SPDE
\begin{equation}\label{SPDEz}
\left\{\begin{split}
   &\frac{\partial z}{\partial t}(t, x)- \frac{\partial^2 z}{ \partial x^2} (t,x) + f(t,x,u(t,x))=\sigma(t,x,u(t,x)) \dot W(t, x) \\
 % &\qquad\qquad\quad +\dot W(t,x), \quad t\ge 0, x\in [0,1],\\
   &z(0,x)=u_0(x),\\
    &z(t,0)=z(t,1)=0.
 \end{split}\right.
\end{equation}
Let $(\bar{z},K)$ be the unique deterministic solution of the obstacle problem \eqref{deterministicparabolicobstacleEq} with $v(t,x)=\mathbb{E}[z(t,x)]-y(t,x)$. Set $u=z+\bar{z}$, then it is easy to verify that $(u,K)$ is the unique solution of Eq.\,\eqref{SPDE}.

Let $\mathbb{Q}$ be defined as \eqref{dQdP} and let $g$ be the corresponding process appeared in Lemma \ref{numupq}.
Then the solution $u(t,x)$ of Eq.\,\eqref{SPDE} satisfies the following SPDE under the measure $\mathbb{Q}$,
\begin{equation}\label{1}
\left\{\begin{split}
   &\frac{\partial u}{\partial t}(t, x)- \frac{\partial^2 u}{ \partial x^2} (t,x) + f(t,x,u(t,x))=\sigma(t,x,u(t,x)) {\widetilde{W}}(\mathrm{d}t, \mathrm{d}x) \\
   &\qquad\qquad\quad+\sigma(t,x,u(t,x))g(t,x)+K(\mathrm{d}t,\mathrm{d}x),\quad x\in [0,1],t\geq0\\
   &u(0,x)=u_0(x),\quad x\in [0,1],\\
    &u(t,0)=u(t,1)=0,\quad t\ge 0.
 \end{split}\right.
\end{equation}
By Lemma \ref{numupq}, we have $\mathbb{Q}$-a.s.
$\int^{t}_0\int_0^1 g^2(s,x)\mathrm{d}s\mathrm{d}x<\infty$ for all $t\in [0,T]$.
Therefore, by the proof of  \cite[Theorem 5.6]{DuanJTP2025}, we have that, for any $T>0$,
\begin{align}\label{boundu_u}
\mathbb{E}^{\mathbb{Q}}\left[\left(\left| u\right|_{\infty}^T\right)^2\right]<\infty.
\end{align}
Consider the solution to the following SPDE:
\begin{equation}\label{2}
\left\{\begin{split}
   &\frac{\partial \widetilde{u}}{\partial t}(t, x)- \frac{\partial^2 \widetilde{u}}{ \partial x^2} (t,x) + f(t,x,\widetilde{u}(t,x))=\sigma(t,x,\widetilde{u}(t,x)) {\widetilde{W}}(\mathrm{d}t, \mathrm{d}x) \\ &\qquad\qquad\quad+\sigma(t,x,u(t,x))g(t,x)+K(\mathrm{d}t,\mathrm{d}x),\quad x\in [0,1],t\geq0\\
   &\widetilde{u}(0,x)=u_0(x),\quad x\in [0,1],\\
    &\widetilde{u}(t,0)=\widetilde{u}(t,1)=0,\quad t\ge 0.
 \end{split}\right.
\end{equation}
It follows from Lemma \ref{numupq} that the law of $(\widetilde{u}, u)$ forms a coupling of $(\mu,\nu)$ under the measure $\mathbb{Q}$. Therefore, by the definition of the Wasserstein distance, we have
\begin{align}\label{4.8label}
W_2(\nu,\mu)^2\leq \mathbb{E}^{\mathbb{Q}}\left[\left(\left| u-\widetilde{u}\right|_{\infty}^T\right)^2\right].
\end{align}
In light of \eqref{Hnvmu} and \eqref{4.8label}, Theorem \ref{maintheorem1} will follow, once we  prove that
\begin{align}\label{suffitioncondition}
\mathbb{E}^{\mathbb{Q}}\left[\left(\left| u-\widetilde{u}\right|_{\infty}^T\right)^2\right]
\leq C_1\mathbb{E}^{\mathbb{Q}}\left[ \int^T_0\int^1_0g^2(s,y)\mathrm{d}s\mathrm{d}y\right],
\end{align}
where $C_1$ is a  constant independent of $g$.

The solution $z(t,x)$ of Eq.\,\eqref{SPDEz} satisfies the following SPDE under the measure $\mathbb{Q}$,
\begin{equation}\label{eq v}
\begin{split}
z(t,x):=&\int^1_0G_t(x,y)u_0(y)\mathrm{d}y-\int^t_0\int^1_0G_{t-s}(x,y)f(s,y,u)\mathrm{d}s\mathrm{d}y\\
&+\int^t_0\int^1_0G_{t-s}(x,y)\sigma(s,y,u)\widetilde{W}(\mathrm{d}s,\mathrm{d}y),\\
&+\int_0^t\int_0^1G(t-s, x,y)\sigma(s,y,u)g(s,y)\mathrm{d}s\mathrm{d}y.
\end{split}
\end{equation}
Define
\begin{equation}\label{eq barv}
\begin{split}
\widetilde{z}(t,x):=&\int^1_0G_t(x,y)u_0(y)\mathrm{d}y-\int^t_0\int^1_0G_{t-s}(x,y)f(s,y,\widetilde{u})\mathrm{d}s\mathrm{d}y\\
&+\int^t_0\int^1_0G_{t-s}(x,y)\sigma(s,y,\widetilde{u})\widetilde{W}(\mathrm{d}s,\mathrm{d}y).
\end{split}
\end{equation}
Then $\widetilde{\bar{z}}:=\widetilde{u}-\widetilde{z}$ and the pair $(\widetilde{z}, \widetilde{K})$ is the solution of Eq.\,\eqref{2}. By a similar argument as in \cite[Theorem 1.4]{NP1992}, \eqref{1}, \eqref{2}, \eqref{eq v}, \eqref{eq barv} and Jensen's inequality, we have
\begin{align*}
|\bar{z}-\widetilde{\bar{z}}|^T_{\infty}\leq&\sup_{t\in[0,T]}\sup_{x\in[0,1]}\left|\left(\mathbb{E}[z(t,x)]-y(t,x)\right)-\left(\mathbb{E}[\widetilde{z}(t,x)]-y(t,x)\right)\right|\\
\leq&\sup_{t\in[0,T]}\sup_{x\in[0,1]}\mathbb{E}\left[|z(t,x)-\widetilde{z}(t,x)|\right]\\
\leq&\mathbb{E}\left[|z-\widetilde{z}|^T_{\infty}\right],
\end{align*}
which implies
\begin{equation}\label{bound utitleu}
\begin{split}
|u-\widetilde{u}|^T_{\infty}\leq&|z-\widetilde{z}|^T_{\infty}+|\bar{z}-\widetilde{\bar{z}}|^T_{\infty}\\
\leq&|z-\widetilde{z}|^T_{\infty}+\mathbb{E}\left[|\bar{z}-\widetilde{\bar{z}}|^T_{\infty}\right],
\end{split}
\end{equation}
almost surely.
Then putting \eqref{1}, \eqref{2}, \eqref{eq v}, \eqref{eq barv} and \eqref{bound utitleu} together, we have that by Jensen's inequality,
\begin{equation}\label{4.14uubar}
\begin{split}
\mathbb{E}^{\mathbb{Q}}\left[\left(|u-\widetilde{u}|^T_{\infty}\right)^2\right]
\leq &\mathbb{E}^{\mathbb{Q}}\left[\Big(|z-\widetilde{z}|^T_{\infty}+\mathbb{E}\left[|\bar{z}-\widetilde{\bar{z}}|^T_{\infty}\right]\Big)^2\right]\\
\leq&2\mathbb{E}^{\mathbb{Q}}\left[\Big(|z-\widetilde{z}|^T_{\infty}\Big)^2+\Big(\mathbb{E}\left[|\bar{z}-\widetilde{\bar{z}}|^T_{\infty}\right]\Big)^2\right]\\
=&4\mathbb{E}^{\mathbb{Q}}\left[\Big(|z-\widetilde{z}|^T_{\infty}\Big)^2\right]\\
\leq&12(I_1+I_2+I_3),
\end{split}
\end{equation}
where
\begin{align*}
I_1:=&\,\mathbb{E}^{\mathbb{Q}}\left[\sup_{t\in[0,T]}\sup_{x\in[0,1]}\left|\int_0^t\int_0^1G_{t-s}( x,y)\left[f(s,y,u)-f(s,y,\widetilde{u})\right] \mathrm{d}s\mathrm{d}y\right|^2\right];\\
I_2:=&\,\mathbb{E}^{\mathbb{Q}}\left[\sup_{t\in[0,T]}\sup_{x\in[0,1]}\left|\int_0^t\int_0^1G_{t-s}( x,y)\left[\sigma(s,y,u)-\sigma(s,y,\widetilde{u})\right] \widetilde{W}(\mathrm{d}s,\mathrm{d}y)\right|^2\right];\\
I_3:=&\,\mathbb{E}^{\mathbb{Q}}\left[\sup_{t\in[0,T]}\sup_{x\in[0,1]}\left|\int_0^t\int_0^1G_{t-s}( x,y)\sigma(s,y,u)g(s,y) \mathrm{d}s\mathrm{d}y\right|^2\right].
\end{align*}
By \eqref{Lipschitz_fsigma}, \eqref{Bound_G2integral} and H\"older's inequality, we have
\begin{equation}\label{I1uubar}
\begin{split}
I_1\leq&\,C_T^2\left(\sup_{t\in[0,T]}\sup_{x\in[0,1]}\int_0^t \int_0^1G^2_{t-s}( x,y)\mathrm{d}s\mathrm{d}y\right)\cdot\mathbb{E}^{\mathbb{Q}}\left[\int^T_0\left(|u-\widetilde{u}|^s_{\infty}\right)^2ds\right]\\
\leq&\,\sqrt{\frac{2T}{\pi}}C_T^2\mathbb{E}^{\mathbb{Q}}\left[\int^T_0\left(|u-\widetilde{u}|^s_{\infty}\right)^2\mathrm{d}s\right]\\
\leq&\,\sqrt{\frac{2T}{\pi}}C_T^2\int^T_0\mathbb{E}^{\mathbb{Q}}\left[\left(|u-\widetilde{u}|^s_{\infty}\right)^2\right]\mathrm{d}s,
\end{split}
\end{equation}
where the constant $C_T$ is  defined in \eqref{Lipschitz_fsigma}. By \eqref{Lipschitz_fsigma} and Lemma \ref{lem int34}  below, we have that
\begin{equation}\label{I2uubar}
\begin{split}
I_2\leq&\,\e\,\mathbb{E}^{\mathbb{Q}}\left[\sup_{(t,x)\in[0,T]\times[0,1]}\left|\sigma(t,x, u)-\sigma(t,x, \widetilde{u})\right|^2\right]\\
&+C_{T,2,\e}\,\mathbb{E}^{\mathbb{Q}}\left[\int^T_0\sup_{y\in[0,1]}\left|\sigma(s,y, u)-\sigma(s,y, \widetilde{u})\right|^2\mathrm{d}s\right]\\
\leq&\,\e C_T^2\,\mathbb{E}^{\mathbb{Q}}\left[\sup_{(t,x)\in[0,T]\times[0,1]}\left|  u(t,x)- \widetilde{u}(t,x)\right|^2\right]\\
&+C_T^2C_{T,2,\e}\,\int^T_0\mathbb{E}^{\mathbb{Q}}\left[\sup_{(r,y)\in[0,s]\times[0,1]}\left| u(r,y)- \widetilde{u}(r,y)\right|^2\right]\mathrm{d}s,
\end{split}
\end{equation}
where  the constant $C_{T,2,\e}$ is   defined in \eqref{DefineCTpe} with $p$ replaced by $2$. By \eqref{bound_sigma}, \eqref{Bound_G2integral} and H\"older's inequality, we have
\begin{equation}\label{I3uubar}
\begin{split}
I_3\leq&\,\mathbb{E}^{\mathbb{Q}}\left[\sup_{t\in[0,T]}\sup_{x\in[0,1]}\left|\int_0^t\int_0^1G^2_{t-s}( x,y)\sigma^2(s,y,u) dsdy\right|\cdot\int^T_0\int^1_0g^2(s,y)\mathrm{d}s\mathrm{d}y\right]\\
\leq&\,M_\sigma^2\mathbb{E}^{\mathbb{Q}}\left[\sup_{t\in[0,T]}\sup_{x\in[0,1]}\left|\int_0^t\int_0^1G^2_{t-s}( x,y) dsdy\right|\cdot\int^T_0\int^1_0g^2(s,y)\mathrm{d}s\mathrm{d}y\right]\\
\leq&\,\sqrt{\frac{2T}{\pi}}M_\sigma^2\mathbb{E}^{\mathbb{Q}}\left[\int^T_0\int^1_0g^2(s,y)\mathrm{d}s\mathrm{d}y\right],
\end{split}
\end{equation}
where the constant $M_\sigma$ is  defined in \eqref{bound_sigma}.
Putting \eqref{4.14uubar}, \eqref{I1uubar}, \eqref{I2uubar} and \eqref{I3uubar} together, we have
\begin{align*}
\mathbb{E}^{\mathbb{Q}}\left[\left(|u-\widetilde{u}|^T_{\infty}\right)^2\right]
\leq &12\e C_T^2\,\mathbb{E}^{\mathbb{Q}}\left[\left(|u-\widetilde{u}|^T_{\infty}\right)^2\right]\\
&+12C_T^2\left(\sqrt{\frac{2T}{\pi}}+C_{T,2,\e}\right)\int^T_0\mathbb{E}^{\mathbb{Q}}\left[\left(|u-\widetilde{u}|^s_{\infty}\right)^2\right]\mathrm{d}s\\
&+12\sqrt{\frac{2T}{\pi}}M_\sigma^2\mathbb{E}^{\mathbb{Q}}\left[\int^T_0\int^1_0g^2(s,y)\mathrm{d}s\mathrm{d}y\right].
\end{align*}
Taking any $0<\e<\frac{1}{12C_T^2}$,
\begin{align*}
\mathbb{E}^{\mathbb{Q}}\left[\left(|u-\widetilde{u}|^T_{\infty}\right)^2\right]
\leq &\frac{12C_T^2\left(\sqrt{\frac{2T}{\pi}}+C_{T,2,\e}\right)}{1-12\e C_T^2}\int^T_0\mathbb{E}^{\mathbb{Q}}\left[\left(|u-\widetilde{u}|^s_{\infty}\right)^2\right]\mathrm{d}s\\
&+\frac{12\sqrt{\frac{2T}{\pi}}M_\sigma^2}{1-12\e C_T^2}\left[\int^T_0\int^1_0g^2(s,y)\mathrm{d}s\mathrm{d}y\right].
\end{align*}
By Proposition \ref{proposition2.1}, we have that, for any $T>0$,
\begin{align}\label{bounduinfty}
\mathbb{E}^{\mathbb{Q}}\left[\left(|\widetilde{u}|^T_{\infty}\right)^2\right]<\infty,
\end{align}which together with \eqref{boundu_u} yields
$\mathbb{E}^{\mathbb{Q}}\left[\left(|u-\widetilde{u}|^T_{\infty}\right)^2\right]<\infty$ for any $T>0$.
Therefore, Gr\"onwall's inequality implies that \eqref{suffitioncondition} holds with
\begin{align*}
C_1=&\, \inf\limits_{\substack{0<\e<\frac{1}{12C_T^2}}}
\Bigg\{\frac{12\sqrt{\frac{2T}{\pi}}M_\sigma^2}{1-12\e C_T^2}\cdot \exp{\Bigg(\frac{12C_T^2\left(\sqrt{\frac{2T}{\pi}}+C_{T,2,\e}\right)}{1-12\e C_T^2}\Bigg)}\Bigg\}.
\end{align*}
The proof is complete.
\end{proof}
\section{Proof of Theorem \ref{maintheorem2}}
In this section, we prove Theorem \ref{maintheorem2} under the constraint \eqref{Restraincondition}. This proof deals with the general case of mean reflection. For this, we first consider the following parabolic obstacle problem
\begin{equation}\label{deterEqGeneral1}
\left\{\begin{split}
   &\frac{\partial \bar{z}}{\partial t}(t, x)- \frac{\partial^2 \bar{z}}{ \partial x^2} (t,x)= \dot{ K}(t, x),\\
   &\mathbb{E}\left[h(t,x,(\bar{z}+v)(t,x))\right]\geq0,\\
    &\int^T_0\int^1_0\mathbb{E}\left[h(t,x,(\bar{z}+v)(t,x))\right]\mathrm{d}K(t,x)=0,\qquad \text{for}\,\,\text{all}\,\, T>0,
 \end{split}\right.
\end{equation}
where $v$ is a continuous $\mathcal{F}_t$-adapted process taking values in  $C([0,T]\times[0,1])$ and $\mathbb{E}\left[|v|^T_{\infty}\right]<\infty$ for any $T>0$. Moreover, $v(0,x)\geq 0$ and $v(t,0)=v(t,1)=0$ for all $x\in[0,1]$, $t\geq0$. It is noted that $\bar{z}$ should be taken as the first component of the solution to the obstacle problem\eqref{deterEqGeneral1} with $v=z$, i.e.,
\begin{align*}
z(t,x):=&\int^1_0G_t(x,y)u_0(y)\mathrm{d}y-\int^t_0\int^1_0G_{t-s}(x,y)f(s,y,u)\mathrm{d}s\mathrm{d}y\\
&+\int^t_0\int^1_0G_{t-s}(x,y)\sigma(s,y,u)W(\mathrm{d}s,\mathrm{d}y),
\end{align*}
and $(\bar{z},K)$ is the unique deterministic solution of the obstacle problem  \eqref{deterEqGeneral1} with  $v=z$. Set $u:=z+\bar{z}$, then $(u,K)$ is the unique deterministic flat solution of Eq.\,\eqref{SPDE} with the constraint \eqref{Restraincondition} (see \cite{DuanJTP2025}).

The following  Lemma \ref{Lemmestz1z2} establishes that, under Assumption (H), the differences in the uniform norm of the solution to the obstacle  problem  \eqref{deterEqGeneral1} can be controlled by the expectation of differences in the uniform norm of obstacles. This result plays a crucial role in SPDEs with general mean reflection. Let $\mathcal{S}^1$ be the set of continuous $\mathcal{F}_t$-adapted process on $C([0,T]\times[0,1])$ with $\mathbb{E}\left[|v|^T_{\infty}\right]<\infty$ for any $T>0$.

\begin{lemma}{\rm(\cite[Lemma 4.3]{DuanJTP2025})\label{Lemmestz1z2}}
{\rm Let  $\bar{z}_i$ be the first component of the solution to  the obstacle problem  \eqref{deterEqGeneral1} corresponding to process $v_i\in \mathcal{S}^1$, $i=1,2$, respectively. Then under the Assumption (H), we have for any $T>0$,}
\begin{align}\label{estz1z2}
|\bar{z}_1-\bar{z}_2|^T_{\infty}\leq l\mathbb{E}\left[|v_1-v_2|^T_{\infty}\right],
\end{align}
{\rm where $l\geq 0$ is a constant.}
\end{lemma}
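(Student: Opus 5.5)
The plan is to reduce the mean-constrained obstacle problem \eqref{deterEqGeneral1} to a classical deterministic obstacle problem with a \emph{deterministic} obstacle built from $v$. Then I would combine the Lipschitz stability of that classical problem with respect to its obstacle with a bi-Lipschitz estimate on how the obstacle depends on $v$.

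\textbf{Step 1 (deterministic obstacle).} For $v\in\mathcal S^1$ and $(t,x)\in[0,T]\times[0,1]$, consider the map $a\mapsto \Phi_v(t,x,a):=\mathbb E[h(t,x,a+v(t,x))]$. This is finite by (H)(iii), since $\mathbb E|v|^T_\infty<\infty$. By (H)(i),(iv) it is strictly increasing, with increments satisfying
\[
c_h(a-b)\le \Phi_v(t,x,a)-\Phi_v(t,x,b)\le C_h(a-b)\quad\text{for } a>b .
\]
Hence it is a bijection of $\mathbb R$, and there is a unique $\ell_v(t,x)$ with $\Phi_v(t,x,\ell_v(t,x))=0$. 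Continuity of $h$ and dominated convergence (using (H)(iii) and the continuity of $v$) make $\Phi_v$ jointly continuous. Together with the uniform lower bound $c_h$ on its slope, this gives continuity of $\ell_v$ on $[0,T]\times[0,1]$. Since $\bar z$ is deterministic and $\Phi_v$ is strictly increasing in $a$, the constraint $\mathbb E[h(t,x,(\bar z+v)(t,x))]\ge0$ is equivalent to $\bar z(t,x)\ge \ell_v(t,x)$. Moreover, $\mathbb E[h(\cdot,(\bar z+v))]=0$ exactly on $\{\bar z=\ell_v\}$. So the Skorokhod condition in \eqref{deterEqGeneral1} becomes $\int\!\!\int(\bar z-\ell_v)\,\mathrm dK=0$. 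Using (H)(i),(ii) and $v(t,0)=v(t,1)=0$, $v(0,\cdot)\ge0$, one checks that $\ell_v=0$ on the lateral boundary and $\ell_v(0,\cdot)\le 0$. Therefore $(\bar z,K)$ solves \eqref{deterministicparabolicobstacleEq} in the sense of Definition \ref{def3.1teach}, with $-v$ there replaced by $\ell_v$.

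\textbf{Step 2 (obstacle stability).} For the classical deterministic problem with zero initial and boundary data, I would invoke the Nualart--Pardoux argument \cite[Theorem 1.4]{NP1992}, the same one used in Section 3:
\[
|\bar z_1-\bar z_2|^T_\infty\le|\ell_{v_1}-\ell_{v_2}|^T_\infty .
\]
The idea is to approximate each solution by penalized equations $\partial_t z^n-\partial_{xx}z^n=n(z^n-\ell)^-$. If $\ell_1\le\ell_2+\delta$, then by the comparison principle for the penalized equations applied to $z_2^n+\delta$ (a supersolution, given the boundary values), one gets $z_1^n\le z_2^n+\delta$. Letting $n\to\infty$ and symmetrizing gives the bound.

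\textbf{Step 3 (Lipschitz dependence of the obstacle).} Fix $(t,x)$ and set $a_i=\ell_{v_i}(t,x)$. From $\Phi_{v_1}(a_1)=0=\Phi_{v_2}(a_2)$ we get
\[
\Phi_{v_1}(a_1)-\Phi_{v_1}(a_2)=\Phi_{v_2}(a_2)-\Phi_{v_1}(a_2)=\mathbb E\big[h(a_2+v_2)-h(a_2+v_1)\big].
\]
The left side has modulus at least $c_h|a_1-a_2|$, and the right side has modulus at most $C_h\,\mathbb E|v_1(t,x)-v_2(t,x)|$. Hence
\[
|a_1-a_2|\le \frac{C_h}{c_h}\mathbb E|v_1(t,x)-v_2(t,x)|\le\frac{C_h}{c_h}\mathbb E\big[|v_1-v_2|^T_\infty\big].
\]
Taking the supremum over $(t,x)$ and combining with Step 2 yields \eqref{estz1z2} with $l=C_h/c_h$.

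I expect the main obstacle to be Step 2. Justifying the penalization/comparison argument requires some care, because the obstacle is only continuous and $K$ may charge sets near $x=0,1$ with infinite total mass. I would handle this exactly as in \cite{NP1992}, where uniqueness and the monotone limit of the penalized solutions are established for continuous obstacles satisfying the boundary compatibility from Step 1. The remaining steps are elementary consequences of (H).
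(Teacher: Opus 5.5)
Your proposal is correct and follows essentially the same route as the cited argument. The paper itself only defers to \cite[Lemma 4.3]{DuanJTP2025}, and Remark~\ref{Remark1} points to exactly your structure: reduce the mean constraint to a deterministic obstacle via the inverse of $a\mapsto\mathbb{E}[h(t,x,a+v(t,x))]$, apply the Nualart--Pardoux Lipschitz stability of the classical obstacle problem \cite{NP1992}, and obtain $l=C_h/c_h$ from the bi-Lipschitz bound as in \cite[Lemma 8]{BEH2018}, with your check of the boundary and initial compatibility of $\ell_v$ filling in details the paper leaves implicit.
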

\begin{remark}\label{Remark1}
{\rm It follows from \cite[Lemma 8]{BEH2018} and the proof of \cite[Lemma 4.1]{DuanJTP2025} that $l=\frac{C_h}{c_h}$  {\rm(}$0<c_h\leq C_h$ {\rm)} in \eqref{estz1z2}, where  $c_h$ and  $C_h$ are  positive constants defined  in  Assumption (H).}
\end{remark}

We now  prove  Theorem \ref{maintheorem2}.
\begin{proof}[The proof of Theorem \ref{maintheorem2}]
Similar to \eqref{bound utitleu}, by Lemma \ref{Lemmestz1z2} and Remark \ref{Remark1}, we have
\begin{equation}\label{bound utitleu2}
\begin{split}
|u-\widetilde{u}|^T_{\infty}\leq&|z-\widetilde{z}|^T_{\infty}+|\bar{z}-\widetilde{\bar{z}}|^T_{\infty}\\
\leq&|z-\widetilde{z}|^T_{\infty}+\frac{C_h}{c_h}\mathbb{E}\left[|\bar{z}-\widetilde{\bar{z}}|^T_{\infty}\right],
\end{split}
\end{equation}
almost surely,  where  $c_h$ and  $C_h$ are  positive constants defined  in  Assumption (H).  We now reproduce the proof of Theorem \ref{maintheorem1}, following the same argument as in  \eqref{suffitioncondition}, with the constant $C_1$ replaced by
\begin{align*}
C_2=&\, \inf\limits_{\substack{0<\e<\frac{1}{6\left(1+C_h/c_h\right)C_T^2}}}
\Bigg\{\frac{6\left(1+C_h/c_h\right)\sqrt{\frac{2T}{\pi}}M_\sigma^2}{1-6\left(1+C_h/c_h\right)\e C_T^2}\\
&\qquad\qquad\qquad\quad\cdot \exp{\Bigg(\frac{6\left(1+C_h/c_h\right)C_T^2\left(\sqrt{\frac{2T}{\pi}}+C_{T,2,\e}\right)}{1-6\left(1+C_h/c_h\right)\e C_T^2}\Bigg)}\Bigg\}.
\end{align*}
The proof is complete.
\end{proof}

\section{Appendix}
In this section, let us establish some estimates,  which play a crucial role in the proof of the main result.

Recall that the function $G$ is the Dirichlet heat kernel on $[0,1]$, i.e., for any $t>0, x, y\in[0,1]$,
\begin{equation}\label{eq kernel}
G_t(x,y):=\frac{1}{\sqrt{4\pi t}}\sum_{n=-\infty}^{\infty}\left[\exp\left(-\frac{(x-y+2n)^2}{4t} \right)-\exp\left(-\frac{(x+y+2n)^2}{4t} \right)\right],
\end{equation}
and the well-known Nash-Aronson estimate (see, e.g., \cite{Dalang2009})
\begin{align*}
0\leq G_t(x,y)\leq\frac{1}{\sqrt{2\pi t}}\exp\left(-\frac{(x-y)^2}{2t} \right), \forall x,y\in[0,1].
\end{align*}
It follows from (3.9) in \cite{SZ2019} that
\begin{align*}
\int^1_0G^2_t(x,y)\mathrm{d}y\leq \sup_{y\in[0,1]} G_t(x,y)\cdot\int^1_0G_t(x,y)\mathrm{d}y\leq \frac{1}{\sqrt{2\pi t}}.
\end{align*}
That is, \begin{align}\label{Bound_G2integral}
\int^T_0\int^1_0G^2_t(x,y)\mathrm{d}s\mathrm{d}y \leq \frac{2T}{\sqrt{\pi }}.
\end{align}

\begin{lemma}{\rm (\cite[Proposition 3.4]{SZ2019})\label{lem int34}}
{\rm Let $\left\{\sigma(s,y):(s,y)\in\mathbb{R}_+\times[0,1]\right\}$ be a random field such that the stochastic integral against space time white noise is well defined. Then for any $T>0$, $0<p\leq 10$ and $\e>0$, there exists a constant $C_{T,p,\e}$ such that}
\begin{equation}\label{Est327}
\begin{split}
&\mathbb{E}\left[\sup_{(t,x)\in[0,T]\times[0,1]}\left|\int^t_0\int^1_0G_{t-s}(x,y)\sigma(s,y)W(\mathrm{d}s,\mathrm{d}y)\right|^p\right]\\
\leq&\e\mathbb{E}\left[\sup_{(t,x)\in[0,T]\times[0,1]}\left|\sigma(s,y)\right|^p\right]
+C_{T,p,\e}\mathbb{E}\int^T_0\sup_{y\in[0,1]}\left|\sigma(s,y)\right|^p\mathrm{d}s,
\end{split}
\end{equation}
{\rm where}

\begin{align}\label{DefineCTpe}
C_{T,p,\e}=\inf_{q>10}\frac{p}{q-p}q^{-\frac{q}{p}}\e{1-\frac{q}{p}}\left(q-p+qC_{T,q}\right)^{\frac{q}{p}}
\end{align}
{\rm with}
\begin{align*}
C_{T,q}<q^{\frac{q}{2}}T^{\frac{q}{4}-\frac{3}{2}}\left(\frac{2}{\pi}\right)^q
\left(\frac{1}{\sqrt{2\pi}}\right)^{\frac{q}{2}+1}\left(\frac{6q-8}{q-10}\right)^{\frac{3q}{2}-2}.
\end{align*}
\end{lemma}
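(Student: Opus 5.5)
The statement is Lemma \ref{lem int34}, quoted from \cite[Proposition 3.4]{SZ2019}. I would prove it by the factorization method combined with a splitting of moments. The key point is that a small moment $p\le 10$ can be controlled by a large moment $q>10$, where the classical Garsia--Kolmogorov argument works.

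\textbf{Step 1: the large-moment bound.} Write $\Phi(t,x)=\int_0^t\int_0^1 G_{t-s}(x,y)\sigma(s,y)W(\mathrm{d}s,\mathrm{d}y)$. For fixed $q>10$, I would first establish
\begin{align*}
\mathbb{E}\Big[\sup_{(t,x)\in[0,T]\times[0,1]}|\Phi(t,x)|^q\Big]\le C_{T,q}\,\mathbb{E}\int_0^T\sup_{y\in[0,1]}|\sigma(s,y)|^q\,\mathrm{d}s .
\end{align*}
The tools are these. The Nash--Aronson bound gives the increment estimates of the heat kernel in $L^2$:
\begin{align*}
\int_0^t\!\!\int_0^1 |G_{t-s}(x,y)-G_{t-s}(x',y)|^2\,\mathrm{d}s\,\mathrm{d}y\lesssim |x-x'| ,
\end{align*}
and the analogous bound in time with $|t-t'|^{1/2}$. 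The BDG inequality with constant of order $q^{q/2}$ then gives
\begin{align*}
\mathbb{E}|\Phi(t,x)-\Phi(t',x')|^q\lesssim \big(|t-t'|^{1/4}+|x-x'|^{1/2}\big)^{q}\,\mathbb{E}\int_0^T\sup_y|\sigma|^q\,\mathrm{d}s ,
\end{align*}
after Hölder in time. The Garsia--Rodemich--Rumsey lemma then yields the supremum bound, since $\Phi(0,\cdot)=0$. The explicit factors in $C_{T,q}$ ($q^{q/2}$, $(2/\pi)^q$, $(2\pi)^{-q/4}$, and the $\big(\frac{6q-8}{q-10}\big)^{3q/2-2}$ from the GRR integral in dimension $3/2$ parabolic scaling) arise exactly from this bookkeeping. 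The restriction $q>10$ is needed for GRR integrability with these exponents.

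\textbf{Step 2: passing to small moments.} Set $A=\sup_{t,x}|\sigma|$ and $B=\int_0^T\sup_y|\sigma(s,y)|^{q}\,\mathrm{d}s$. I would apply Step 1 with stopping times, e.g. $\tau_\lambda=\inf\{t:\int_0^t\sup_y|\sigma|^q\,\mathrm{d}s\ge\lambda\}$. This gives a good-$\lambda$/Lenglart-type domination: $\mathbb{P}(\sup|\Phi|^q>a)\le \frac{1}{a}\mathbb{E}[B\wedge b]+\mathbb{P}(B>b)$, with the constant $C_{T,q}$. Lenglart's inequality with exponent $p/q<1$ then gives
\begin{align*}
\mathbb{E}[\sup|\Phi|^p]\le \frac{q-p+qC_{T,q}}{q-p}\,\mathbb{E}[B^{p/q}] .
\end{align*}
This matches the structure $\frac{p}{q-p}(q-p+qC_{T,q})^{q/p}$.

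\textbf{Step 3: splitting with Young.} Next I bound
\begin{align*}
B^{p/q}\le \Big(A^{q-p}\int_0^T\sup_y|\sigma|^p\,\mathrm{d}s\Big)^{p/q}
\end{align*}
and apply Young's inequality with exponents $\frac{q}{q-p}$ and $\frac{q}{p}$ and a weight chosen to produce $\e A^p$. The remaining term carries the coefficient $\e^{1-q/p}$ times powers of $q$, which gives the displayed $C_{T,p,\e}$. Optimizing over $q>10$ gives the infimum.

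\textbf{Main obstacle.} The main obstacle is Step 1: tracking explicit constants through GRR in the anisotropic parabolic metric. I would follow \cite[Section 3]{SZ2019} line by line, using \eqref{Bound_G2integral}-type kernel integrals. The localization needed to apply Lenglart when only $\mathbb{E}\int\sup|\sigma|^p<\infty$ is routine by stopping.
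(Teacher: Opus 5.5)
Your architecture is the one behind \cite[Proposition 3.4]{SZ2019}, which the paper cites without proof. It has three parts: a sup-moment bound for some $q>10$; a stopping-time reduction to $\mathbb{E}[B^{p/q}]$ with $B=\int_0^T\sup_y|\sigma(s,y)|^q\,\mathrm{d}s$; and Young's inequality with exponents $\frac{q}{q-p}$ and $\frac{q}{p}$. Steps 2 and 3 are correct and reproduce the displayed $C_{T,p,\e}$ exactly. For this, take the threshold on $B$ equal to the threshold on $\sup|\Phi|^q$ and integrate the tail bound directly. With $k=p/q$,
\[
\int_0^\infty k a^{k-1}\Big(\frac{C_{T,q}}{a}\mathbb{E}[B\wedge a]+\mathbb{P}(B\ge a)\Big)\mathrm{d}a=\frac{q-p+qC_{T,q}}{q-p}\,\mathbb{E}\big[B^{p/q}\big].
\]
Quoting the textbook Lenglart bound $\frac{2-k}{1-k}\mathbb{E}[A^k]$ with $A=C_{T,q}B$ would give a valid but different constant.

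\textbf{The error is in your sketch of Step 1.} The displayed increment bound $\mathbb{E}|\Phi(t,x)-\Phi(t',x')|^q\lesssim(|t-t'|^{1/4}+|x-x'|^{1/2})^q\,\mathbb{E}\int_0^T\sup_y|\sigma|^q\,\mathrm{d}s$ is false.

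\emph{Counterexample.} Take $t=t'$, $h=|x-x'|$ small, $x,x'$ away from the boundary, and the deterministic $\sigma(s,y)=h^{-2/q}1_{[t-h^2,t]}(s)$. Then $\int_0^T\sup_y|\sigma|^q\,\mathrm{d}s=1$, so the right side is of order $h^{q/2}$. But the increment is Gaussian with variance $\gtrsim h^{-4/q}\int_0^{h^2}s^{-1/2}\,\mathrm{d}s\approx h^{1-4/q}$, so the left side is of order $h^{q/2-2}$.

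\emph{Why it fails.} After BDG, the weight $k(s)=\int_0^1|G_s(x,y)-G_s(x',y)|^2\,\mathrm{d}y\lesssim\min\{s^{-1/2},h^2s^{-3/2}\}$ is unbounded in $s$. So the H\"older step in time must use exponents $\frac{q}{q-2}$ and $\frac q2$, and this gives $\big(\int_0^T k(s)^{q/(q-2)}\,\mathrm{d}s\big)^{(q-2)/2}\lesssim h^{q/2-2}$. Time increments behave the same way. The correct estimate is
\[
\mathbb{E}|\Phi(t,x)-\Phi(t',x')|^q\lesssim \big(|t-t'|^{1/2}+|x-x'|\big)^{\frac q2-2}\,\mathbb{E}\int_0^T\sup_{y}|\sigma(s,y)|^q\,\mathrm{d}s .
\]

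\emph{Where $q>10$ comes from.} This loss of $2/q$ in the H\"older exponent is exactly what produces the threshold. Garsia--Rodemich--Rumsey in the parabolic metric $|t-t'|^{1/2}+|x-x'|$ has homogeneous dimension $3$ (not $3/2$), so it requires $\frac q2-2>3$, i.e.\ $q>10$. With the exponents you wrote it would only need $q>6$, so your explanation of the threshold is inconsistent.

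Also, the Gaussian (Nash--Aronson) upper bound on $G$ by itself gives no control of increments. You need gradient bounds such as $|\partial_xG_s(x,y)|\lesssim s^{-1}e^{-c(x-y)^2/s}$, obtained from the image series.

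With this correction, Step 1 gives $\mathbb{E}\sup|\Phi|^q\le C_{T,q}\,\mathbb{E}\int_0^T\sup_y|\sigma|^q\,\mathrm{d}s$ for $q>10$, and your Steps 2--3 go through unchanged.
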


\vskip0.5cm
\noindent{\bf Acknowledgments}: %The authors are grateful to the anonymous referees for comments.
%The completion of this article is inseparable from the guidance of Ran Wang.
%The authors are  grateful to Ran Wang for his good
%suggestions and timely help.
The research of B. Zhang is partially supported by the NSF for Youths of Jiangsu Province (No. BK20251077). The research of B. Qian is partially supported by the NSF of China (No. 11671076).

\medskip

\bigskip

\end{document}